\documentclass[11pt]{amsart}

\usepackage{
    amsmath,
    amsfonts,
    amssymb,
    amsthm,
    amscd,
    comment,
    enumitem,
    etoolbox,
    gensymb,    
    mathtools,
    mathdots,
    stmaryrd,
    booktabs,
}
\usepackage[usenames,dvipsnames]{xcolor}
\usepackage[all]{xy}

\usepackage[T1]{fontenc}
\usepackage{bbm}                     
\usepackage[colorlinks=true, linkcolor=blue, citecolor=blue, urlcolor=blue, breaklinks=true]{hyperref}

\DeclareFontFamily{OT1}{pzc}{}
\DeclareFontShape{OT1}{pzc}{m}{it}{<-> s * [1.10] pzcmi7t}{}
\DeclareMathAlphabet{\mathpzc}{OT1}{pzc}{m}{it}

\usepackage[capitalize]{cleveref}   

\crefname{defin}{Definition}{Definitions}
\crefname{eg}{Example}{Examples}
\crefname{egs}{Examples}{Examples}
\crefname{convention}{Convention}{Convention}
\crefname{lem}{Lemma}{Lemmas}
\crefname{prop}{Proposition}{Propositions}
\crefname{theo}{Theorem}{Theorems}
\crefname{rem}{Remark}{Remarks}
\crefname{equation}{}{}
\crefname{enumi}{}{}

\newcommand\la{\lambda}
\newcommand\La{\Lambda}

\newcommand\C{\mathbb{C}}

\newcommand\N{\mathbb{N}}

\newcommand\Q{\mathbb{Q}}

\newcommand\hash{\#}

\def\a{\alpha}

\DeclareMathOperator{\Hom}{Hom}

\DeclareMathOperator{\Res}{Res}

\newtheorem{theo}{Theorem}[section]

\newtheorem{cor}[theo]{Corollary}

\theoremstyle{definition}

\numberwithin{equation}{section}
\allowdisplaybreaks

\setenumerate[1]{label=(\alph*)}          

\newtoggle{comments}
\newtoggle{details}
\newtoggle{detailsnote}

\iftoggle{comments}{%
    \usepackage[notref,notcite]{showkeys}   
    \newcommand{\acomments}[1]{
        \ \\
        {\color{red}
            \textbf{AS:} #1
        }
        \ \\
    }
    \newcommand{\pcomments}[1]{
        \ \\
        {\color{blue}
            \textbf{PM:} #1
        }
        \ \\
    }
}{%
    \newcommand{\acomments}[1]{\ignorespaces}
    \newcommand{\pcomments}[1]{\ignorespaces}
}

\iftoggle{details}{%
    \newcommand{\details}[1]{
        \ \\
        {\color{OliveGreen}
            \textbf{Details:} #1
        }
        \\
    }
}{%
    \newcommand{\details}[1]{\ignorespaces}
}

\begin{document}

\title{On a Schur-positive function}

\author{Peter J. McNamara}
\address[]{
    School of Mathematics and Statistics \\
    University of Melbourne \\
    Parkville, VIC, 3010, Australia
}
\urladdr{\href{http://petermc.net/maths}{petermc.net/maths}, \textrm{\textit{ORCiD}:} \href{https://orcid.org/0000-0001-6111-1511}{orcid.org/0000-0001-6111-1511}}
\email{maths@petermc.net}

\begin{abstract}
   We prove Schur-positivity for a family of symmetric functions.
\end{abstract}



\ifboolexpr{togl{comments} or togl{details}}{%
  {\color{magenta}DETAILS OR COMMENTS ON}
}{%
}

\maketitle
\thispagestyle{empty}


\section{Introduction}

Let $e$ be a positive integer and let $\mu_e$ denote the set of $e$-th roots of 1 (in $\C$). For $n \in \N$, define the symmetric polynomial
\begin{equation*}
    W_n^{(e)}(x_1,x_2,\dotsc,x_r)
    = \frac{1}{e^r}
    \sum_{\zeta_1,\dotsc,\zeta_n\in \mu_e} \left( \sum_{i=1}^r \zeta_i \sqrt[e]{x_i} \right)^{en}.
\end{equation*}

Taking the inverse limit over $r$, these define a symmetric function $W_n^{(e)} \in \Lambda$.  In terms of the monomial symmetric functions, we have the expansion
\begin{equation}\label{wmonomial}
    W_n^{(e)}= \sum_{\la\vdash r} \binom{er}{e\la} m_{\la},
\end{equation}
where $\binom{en}{e\la}$ is a multinomial coefficient and $e\la$ denotes the partition obtained from $\la$ by multiplying all parts by $e$.

In \cite[Conjecture 10.7]{spinbrauer}, it was conjectured based on some computer computations that $W_n^{(2)}$ is Schur-positive. In this paper we prove

\begin{theo}
The symmetric function $W_n^{(e)}$ is Schur positive.
\end{theo}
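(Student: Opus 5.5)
The plan is to reduce Schur-positivity of $W_n^{(e)}$ to total nonnegativity of a Toeplitz matrix, via a generating function and the Cauchy identity.

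\emph{Step 1: a generating function.} By \eqref{wmonomial}, the coefficient of $m_\la$ in $W_n^{(e)}$ is $(en)!\prod_i \frac{1}{(e\la_i)!}$. Set $a_k = 1/(ek)!$ for $k\ge 0$ and $a_k=0$ for $k<0$, and put $F(y)=\sum_{k\ge0} a_k y^k$. Then
\[
\prod_{i\ge1} F(x_i)=\sum_{n\ge0}\frac{W_n^{(e)}}{(en)!}.
\]
Since $(en)!>0$, it suffices to show that the degree $n$ part of $\prod_i F(x_i)$ is Schur-positive.

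\emph{Step 2: Jacobi--Trudi and Cauchy.} Let $\phi\colon\Lambda\to\Q$ be the ring homomorphism with $\phi(h_k)=a_k$. Applying $\phi$ in the $y$-variables to the Cauchy identity
\[
\prod_i\sum_k h_k(y)x_i^k=\sum_\la s_\la(x)s_\la(y)
\]
gives $\prod_i F(x_i)=\sum_\la \phi(s_\la)\,s_\la(x)$. By Jacobi--Trudi,
\[
\phi(s_\la)=\det\bigl(a_{\la_i-i+j}\bigr)_{i,j}.
\]
So the theorem reduces to showing that all these Toeplitz minors of the sequence $(a_k)$ are nonnegative.

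\emph{Step 3: reduction to $b_m=1/m!$.} This is the step I expect to need the real input, and I would handle it by passing to a denser sequence. Put $b_m=1/m!$ for $m\ge0$ and $b_m=0$ for $m<0$. Then $a_k=b_{ek}$, since $ek<0$ exactly when $k<0$, and
\[
a_{\la_i-i+j}=b_{p_i-q_j}, \qquad p_i=e\la_i-ei,\quad q_j=-ej.
\]
The $p_i$ are strictly decreasing in $i$ and the $q_j$ are strictly decreasing in $j$. Hence the matrix $(a_{\la_i-i+j})$ is a square submatrix of the infinite Toeplitz matrix $T=(b_{p-q})_{p,q\in\mathbb Z}$, with rows and columns taken in a consistent order, and its determinant is a minor of $T$.

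\emph{Step 4: total nonnegativity.} The sequence $b_m$ is a Pólya frequency sequence, because its generating function is $e^{y}$, and by the Aissen--Schoenberg--Whitney--Edrei theorem all minors of $T$ are nonnegative. More elementarily, the minors of $T$ are exactly $\phi'(s_{\la/\mu})$ for the exponential specialization $\phi'(h_m)=1/m!$. Under this specialization $\phi'(s_{\la/\mu})=f^{\la/\mu}/|\la/\mu|!\ge0$, where $f^{\la/\mu}$ counts standard skew tableaux. Alternatively, one could apply Lindström--Gessel--Viennot to the limit of the principal specialization. Combining this with Steps 2 and 3 gives $\phi(s_\la)\ge0$ for every $\la$, so $W_n^{(e)}$ is Schur-positive.

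The only care needed is in Step 3: one should check that the row and column index choices really produce a submatrix with the correct sign and order of rows and columns, and that the entries with negative index vanish on both sides. I would do this first, on a small case such as $e=2$ and $\la=(2,1)$.
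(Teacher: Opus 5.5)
Your argument is correct and is essentially the paper's second proof. Steps 1--2 reproduce \cref{varphi} and \cref{coeffdet}, and after shifting by $(e-1)\ell$ your indices $p_i=e\la_i-ei$ and $q_j=-ej$ become $\nu_i-i$ and $\mu_j-j$ for the paper's skew shape $\xi(\la)=\nu/\mu$, so your ``more elementary'' Step 4 is exactly Aitken's formula \cref{sytformula}, which gives $a_\la=\hash\{{\rm SYT}(\xi(\la))\}$; since $\mu\subseteq\nu$ this even yields $a_\la>0$, and the P\'olya-frequency citation is just an alternative justification of the same total nonnegativity.
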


Define coefficients $a_\la$ by
\[
W_n^{(e)} = \sum_{\la\vdash n} a_\la s_\la.
\]
After giving a determinantal formula for $a_\la$, we give three proofs that $a_\la>0$. The first uses the Gessel-Viennot combinatorial interpretation of a determinant of binomial coefficients. The second yields a combinatorial interpretation of the coefficients $a_\la$ as the number of standard Young tableaux of a certain shape. The third yields a representation-theoretic interpretation of the $a_\la$ as dimensions of simple modules over Khovanov-Lauda-Rouquier algebras (henceforth known as KLR algebras, and also known in the literature as quiver Hecke algebras), and provides a natural example of a $S_n$-module whose Frobenius characteristic is equal to $W_n^{(e)}$.

\subsection*{Acknowledgements}

We thank R. Muth, A. Savage and L. Speyer for various comments. In particular we are indebted to L. Speyer for his visit to Melbourne, which encouraged the author to compute the value 61, leading to this work.

\section{A determinantal formula}

We use standard notation for symmetric functions. When $\lambda$ is a partition, $\ell$ can be any integer greater than or equal to the length of $\la$.

Define a ring homomorphism $\varphi^{(e)}:\La\to \Q$ by
\[
\varphi^{(e)}(h_n)= \frac{1}{(en)!}
\]

\begin{theo}\label{varphi}
The function $W_n^{(e)}$ has the Schur expansion
\[
W_n^{(e)} =  (en)!\sum_{\la \vdash n} \varphi^{(e)}(s_\la) s_\la.
\]
\end{theo}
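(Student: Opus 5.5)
The approach is to read off Schur coefficients via the Hall inner product, with the monomial expansion \eqref{wmonomial} as input. Here $r$ is a typo for $n$: $W_n^{(e)} = \sum_{\la\vdash n} \binom{en}{e\la} m_\la$. Expanding the $en$-th power with the multinomial theorem and averaging over the roots of unity kills every monomial whose exponents are not all divisible by $e$. That recovers this formula, and I take it as given.

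Since $\{m_\la\}$ and $\{h_\la\}$ are dual bases for the Hall inner product, the coefficient of $m_\la$ in $W_n^{(e)}$ is $\langle W_n^{(e)}, h_\la\rangle$. The expansion therefore says
\[
\langle W_n^{(e)}, h_\la\rangle = \binom{en}{e\la} = (en)!\prod_i \frac{1}{(e\la_i)!} = (en)!\,\varphi^{(e)}(h_\la),
\]
using that $\varphi^{(e)}$ is a ring homomorphism and $h_\la=\prod_i h_{\la_i}$.

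Consider the linear functional $f\mapsto \langle W_n^{(e)}, f\rangle$ on $\La^n$. It agrees with $(en)!\,\varphi^{(e)}$ on the basis $\{h_\la\}_{\la\vdash n}$ of $\La^n$, so by linearity the two functionals agree on all of $\La^n$.

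Since Schur functions are orthonormal, the coefficient of $s_\la$ in $W_n^{(e)}$ is
\[
\langle W_n^{(e)}, s_\la\rangle = (en)!\,\varphi^{(e)}(s_\la),
\]
which is exactly the claimed expansion. If one wants an explicit form, Jacobi--Trudi gives $\varphi^{(e)}(s_\la)=\det\bigl(1/(e(\la_i-i+j))!\bigr)$, with the convention $1/(\text{negative})!=0$. This is presumably the determinantal formula used later.

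There is no real obstacle. The only point needing care is the justification of \eqref{wmonomial}, namely that averaging over $\zeta\in\mu_e^n$ leaves exactly the monomials $\prod_i x_i^{k_i/e}$ with $e\mid k_i$. This follows from $\frac1e\sum_{\zeta\in\mu_e}\zeta^k=[e\mid k]$.
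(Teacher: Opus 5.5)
Your proposal is correct and follows the paper's proof essentially verbatim. The paper also shows $\langle W_n^{(e)}, f\rangle = (en)!\,\varphi^{(e)}(f)$ for $f$ of degree $n$ by checking it on the basis $h_\la$, using $m$--$h$ duality and the monomial expansion \eqref{wmonomial}, and then reads off the Schur coefficients by orthonormality.
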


\begin{proof}
We prove that if $f\in \Lambda_n$, then
\[
\langle W_n , f \rangle = (en)! \varphi^{(e)} (f).
\]
By linearity it suffices to prove this when $f=h_\la$. It now follows from the fact that the complete symmetric functions are a dual basis to the monomial symmetric functions, together with the monomial decomposition \cref{wmonomial}. Since the Schur functions are an orthonormal basis, the theorem follows.
\end{proof}

\begin{cor}
The coefficient $a_\la$ is given
by
\begin{equation}\label{coeffdet}
a_\la=(en)! \det \left( \frac{1}{(e(\la_i+j-i))!} \right)_{i,j=1}^\ell
\end{equation}
\end{cor}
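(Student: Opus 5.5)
This follows from \cref{varphi} combined with the Jacobi--Trudi identity. Comparing the expansion $W_n^{(e)}=\sum_{\la\vdash n} a_\la s_\la$ with \cref{varphi}, and using that the Schur functions are linearly independent, we get $a_\la=(en)!\,\varphi^{(e)}(s_\la)$ for each $\la\vdash n$. The task is therefore to evaluate $\varphi^{(e)}(s_\la)$.

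The Jacobi--Trudi identity gives
\[
s_\la=\det\left(h_{\la_i+j-i}\right)_{i,j=1}^\ell
\]
for any $\ell$ at least the length of $\la$. We use the conventions $h_0=1$ and $h_k=0$ for $k<0$. Since $\varphi^{(e)}$ is a ring homomorphism, it commutes with taking determinants of matrices with entries in $\La$, because a determinant is a polynomial in its entries. Hence
\[
\varphi^{(e)}(s_\la)=\det\left(\varphi^{(e)}(h_{\la_i+j-i})\right)_{i,j=1}^\ell .
\]
Substituting $\varphi^{(e)}(h_k)=1/(ek)!$ then yields \cref{coeffdet}.

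The only point needing care is the boundary cases of the definition of $\varphi^{(e)}$.
\begin{itemize}
\item For $k=0$, being a ring homomorphism forces $\varphi^{(e)}(h_0)=\varphi^{(e)}(1)=1$, which agrees with $1/0!$.
\item For $k<0$ we have $h_k=0$, so $\varphi^{(e)}(h_k)=0$. This agrees with the formula provided we adopt the standard convention $1/m!=0$ for negative integers $m$, i.e.\ $1/\Gamma(m+1)$ with poles giving zero.
\end{itemize}
With this convention the entries match term by term, and the determinant formula holds for every $\ell\ge\ell(\la)$. Changing $\ell$ does not affect the value: enlarging $\ell$ adds rows with $\la_i=0$, which are unitriangular below the diagonal.
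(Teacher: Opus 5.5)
Your proof is correct and follows the paper's argument: you read off $a_\la=(en)!\,\varphi^{(e)}(s_\la)$ from \cref{varphi} and apply the ring homomorphism $\varphi^{(e)}$ entrywise to the Jacobi--Trudi determinant. Your handling of the boundary conventions ($h_0=1$, and $1/m!=0$ for $m<0$) and of the independence from $\ell$ fills in details that the paper leaves implicit.
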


\begin{proof}
This is immediate from \cref{varphi} and the Jacobi-Trudi formula.
\end{proof}

%

\section{First proof of positivity}

In the determinant \cref{coeffdet}, we
put the $i$-th row over the common denominator $(e(\la_i+\ell-i))!$ and divide the entries in the $j$-th row by $(e(\ell-j))!$. This yields
\[
a_\la = \frac{(en)! \prod_{j=1}^{\ell-1} (ej)! }{ \prod_{i=1}^\ell (e(\la_i+\ell-i))! } \det \left( {e(\la_i+\ell-i)\choose e(\ell-j)} \right)_{i,j=1}^\ell
\]

By \cite[Theorem 1]{gv}, this determinant of binomial coefficients is equal to the number of $\ell$-tuples of non-intersecting lattice paths, starting at the points $(0,-e(\la_i+\ell-i))$ and ending at the points $(e (\ell-j),e(\ell-j))$ with all steps either east or north. In particular this number is positive, giving the first proof of Schur postivity of $W_n^{(e)}$.

\section{Second proof of positivity}

Given a partition $\la$ of $n$, define the partitions $\mu$ and $\nu$ by
\[
\nu=(e\la_1+(\ell-1)(e-1),e\la_2+(\ell-2)(e-1),\cdots, e\la_\ell),\qquad \mu=((\ell-1)(e-1),(\ell-2)(e-1),\cdots,0)
\]
and define the skew shape $\xi(\la)=\nu/\mu$. In this section, we prove
\begin{theo}
The coefficient $a_\la$ is equal to the number of standard Young tableau of shape $\xi(\la)$.
\end{theo}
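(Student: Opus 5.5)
Use the Aitken–Jacobi–Trudi formula for skew standard Young tableaux: for a skew shape $\nu/\mu$ with $N=|\nu|-|\mu|$ boxes,
\[
f^{\nu/\mu} = N!\,\det\left(\frac{1}{(\nu_i-\mu_j-i+j)!}\right)_{i,j=1}^{\ell},
\]
with the convention $1/k!=0$ for $k<0$. We compare this with the determinantal formula \cref{coeffdet} for $a_\la$. (If one prefers not to quote Aitken, the formula follows the same way as \cref{varphi}: $f^{\nu/\mu}$ is $N!$ times the image of $s_{\nu/\mu}$ under the specialization $h_k\mapsto 1/k!$, and then one applies the skew Jacobi–Trudi identity.)

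\textbf{Step 1: the box count.} The two shapes give $|\nu| = e n + (e-1)\binom{\ell}{2}$ and $|\mu| = (e-1)\binom{\ell}{2}$. Hence $N = en$, and the prefactor $N!$ is exactly the $(en)!$ appearing in \cref{coeffdet}.

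\textbf{Step 2: matching the entries.} We have $\nu_i=e\la_i+(\ell-i)(e-1)$ and $\mu_j=(\ell-j)(e-1)$. Therefore
\[
\nu_i-\mu_j-i+j = e\la_i + (j-i)(e-1) + (j-i) = e(\la_i+j-i).
\]
So the $(i,j)$ entry of the Aitken matrix is $1/(e(\la_i+j-i))!$, which is precisely the $(i,j)$ entry in \cref{coeffdet}.

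\textbf{Step 3: conventions.} The negative-argument convention agrees with the Jacobi–Trudi convention $h_k=0$ for $k<0$, and in both formulas $h_0\mapsto 1$. One should also check that $\nu$ and $\mu$ are genuine partitions with $\mu\subseteq\nu$; this holds because $\nu_i-\nu_{i+1}=e(\la_i-\la_{i+1})+(e-1)\ge 0$ and $\nu_i\ge\mu_i$. With these checks the two determinants coincide, which gives $a_\la=f^{\xi(\la)}$.

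\textbf{Main obstacle.} The only substantive input is the Aitken formula, and the rest is bookkeeping. The point needing care is that the row and column index shifts combine correctly. This works because the staircase is scaled by $e-1$: adding the $-i+j$ shift to the $(j-i)(e-1)$ difference gives exactly $e(j-i)$.
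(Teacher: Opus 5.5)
Your proof is correct and is essentially the paper's argument. The paper also derives the result immediately from Aitken's determinantal formula for standard Young tableaux of skew shape, and mentions the same alternative derivation via the specialization $h_k\mapsto 1/k!$ and the skew Jacobi--Trudi identity. Your Steps 1--3 make explicit the bookkeeping the paper leaves to the reader, namely $|\xi(\la)|=en$ and $\nu_i+j-i-\mu_j=e(\la_i+j-i)$.
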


\begin{proof}
This is an immediate consequence of the fact \cite{aitken} that for any skew shape $\theta=\alpha/\beta$, we have
\begin{equation}\label{sytformula}
\frac{\hash \{SYT(\theta)\}}{|\theta|!} =
 \det\left( \frac{1}{(\alpha_i+j-i-\beta_j)!} \right)_{i,j}.
\end{equation}
\details{There are two ingredients in the proof of this fact. One is that for any skew shape $\theta$, we have
$$\varphi^{(1)}(s_\theta)=\frac{\hash \{SYT(\theta)\}}{|\theta|!}.$$
This fact is standard, one proof without presupposing knowledge about skew Specht modules is to note that if $\theta=\alpha/\beta$, $\hash \{SYT(\theta)\}$ is the number of paths in Youngs lattice from $\beta$ to $\alpha$. Hence it is equal to
\begin{align*}
\dim\Hom (S^\beta, \Res S^\alpha )&= \dim\Hom(S^\beta,\bigoplus_\nu (S^\beta 
\boxtimes S^\nu)^{\oplus c^{\alpha}_{\beta\nu}}) \\
&= \sum_\nu c_{\beta\nu}^\alpha \dim S^\nu \\
&= \sum_\nu c_{\beta\nu}^\alpha \varphi^{(1)}(s_\nu)|\nu|! \\
&= |\theta|!\varphi^{(1)}(s_\theta).
\end{align*}

The other ingredient is the Jacobi-Trudi formula for skew Schur functions,
namely
\[
s_{\a/\beta} = \det (h_{\a_i + j - i - \beta_j})_{i,j}.
\]
From these ingredients, the proof of \cref{sytformula} is immediate.
}\end{proof}

A combinatorial consequence of the results we have proved is

\begin{cor}
We have
\[
\sum_{\la\vdash n} \# {\rm SYT}(\lambda) \times \# {\rm SYT}(\xi(\lambda)) = \frac{(en)!}{(e!)^n}.
\]
\end{cor}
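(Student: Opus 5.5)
We plan to extract the identity by pairing $W_n^{(e)}$ with a suitable symmetric function whose Schur coefficients are $\#{\rm SYT}(\la)$. The natural choice is $p_1^n=h_1^n=\sum_{\la\vdash n} \#{\rm SYT}(\la)\, s_\la$. Since the Schur functions are orthonormal, the Schur expansion $W_n^{(e)}=\sum_\la a_\la s_\la$ gives
\[
\langle W_n^{(e)}, h_1^n\rangle = \sum_{\la\vdash n} a_\la \,\#{\rm SYT}(\la),
\]
and by the preceding theorem $a_\la=\#{\rm SYT}(\xi(\la))$. So the left-hand side of the corollary equals $\langle W_n^{(e)},h_1^n\rangle$.

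We then compute this pairing in two ways; either one suffices. First, from the proof of \cref{varphi} we have $\langle W_n^{(e)},f\rangle=(en)!\,\varphi^{(e)}(f)$ for $f\in\La_n$. Since $\varphi^{(e)}$ is a ring homomorphism with $\varphi^{(e)}(h_1)=1/e!$, we get $\varphi^{(e)}(h_1^n)=(e!)^{-n}$, and hence $\langle W_n^{(e)},h_1^n\rangle=(en)!/(e!)^n$. Alternatively, $h_1^n=h_{(1^n)}$ is dual to $m_{(1^n)}$, so the pairing is the coefficient of $m_{(1^n)}$ in \cref{wmonomial}, namely the multinomial $\binom{en}{e,e,\dots,e}=(en)!/(e!)^n$. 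The two computations agree, which serves as a consistency check.

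There is no serious obstacle. The only points requiring care are the standard fact that the coefficient of $s_\la$ in $h_1^n$ is $\#{\rm SYT}(\la)$ (iterated Pieri rule), and the observation that \cref{wmonomial} should be read with $\la\vdash n$ and multinomial $\binom{en}{e\la}$, despite the typographical $r$ in its statement.
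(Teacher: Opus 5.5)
Your argument is correct and is essentially the derivation the paper leaves implicit. Pairing $W_n^{(e)}$ with $h_1^n=\sum_{\la\vdash n}\#{\rm SYT}(\la)\,s_\la$ amounts to extracting the coefficient of $m_{(1^n)}$: it gives $\sum_\la a_\la\,\#{\rm SYT}(\la)$ on the Schur side, with $a_\la=\#{\rm SYT}(\xi(\la))$ from the second proof, and the multinomial $(en)!/(e!)^n$ on the monomial side (equivalently $(en)!\,\varphi^{(e)}(h_1)^n$), and your reading of the monomial expansion with $\la\vdash n$ is the right one.
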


The $e=1$ case of this result has a combinatorial proof via Robinson-Schensted, so we may ask if there is an $e$-Robinson-Schensted correspondence that produces a bijective proof of this identity.

\section{Third proof of positivity\label{sec:klr}}

For reasons of space we punt the definition of KLR algebras to \cite{klr3}. We will use KLR algebras as in that paper, in particular we work over a field of characteristic zero, and all polynomials $Q_{ij}(u,v)$ are $\pm$ a power of $u-v$. We work in type $A_{e-1}^{(1)}$ and let $\delta$ be the minimal imaginary root. Then there exists a representation $L$ of the KLR algebra $R(\delta)$ whose character satisfies
\[
\operatorname{ch}(L)=[0,1,2,\cdots,e-1].
\]
Then \cite[\S 17]{klr3} constructs for each partition $\lambda\vdash n$ a simple module $L(\la)$ for the KLR algebra $R(n\delta)$, so that we have
\[
L^{\circ n} \cong \bigoplus_{\la\vdash n} L(\la) \otimes S^{\la}
\]
as $R(n\delta)-S_n$-bimodules (here $S^\la$ are the Specht modules).

We will prove
\begin{theo}
For any partition $\la$ of $n$, we have $a_\la = \dim L(\la)$.
\end{theo}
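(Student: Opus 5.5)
The plan is to compute $\dim L^{\circ n}$ as a graded $S_n$-representation in two ways and then compare isotypic components. Set $V=\bigoplus_\la L(\la)\otimes S^\la$. By the bimodule decomposition quoted from \cite[\S 17]{klr3}, the multiplicity of $S^\la$ in $L^{\circ n}$, viewed as a (right) $S_n$-module, is $\dim L(\la)$. It therefore suffices to show that the Frobenius characteristic of $L^{\circ n}$ as an $S_n$-module equals $W_n^{(e)}=\sum_\la a_\la s_\la$.

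The first step identifies the underlying vector space. The convolution product is $L^{\circ n}=R(n\delta)e_{\delta,\dots,\delta}\otimes_{R(\delta)^{\otimes n}} L^{\boxtimes n}$. By the basis theorem for KLR algebras (the shuffle/Mackey description), $L^{\circ n}$ has a basis indexed by minimal-length coset representatives of $S_{en}/S_e^n$ tensored with a basis of $L^{\boxtimes n}$. Since $\dim L=1$, this gives $\dim L^{\circ n}=(en)!/(e!)^n$, which agrees with the corollary above. This is only a consistency check, though; what we need is the character.

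The second step computes the character of the $S_n$-action. The $S_n$-action on $L^{\circ n}$ is built from intertwiners $\tau$ that permute the $\delta$-blocks. Computing the trace of a permutation $w$ with cycle type $\rho$ directly is hard because the intertwiners are not permutation matrices. I would instead use Frobenius reciprocity. For a Young subgroup $S_\mu\subseteq S_n$, the dimension of the $S_\mu$-invariants of $L^{\circ n}$ equals $\langle W,h_\mu\rangle$, where $W$ is the Frobenius characteristic. The key claim is that
\[
(L^{\circ n})^{S_\mu}\cong L^{(\mu_1)}\circ\cdots\circ L^{(\mu_k)},
\]
where $L^{(m)}=(L^{\circ m})^{S_m}$ is the "divided power" summand $L(\,(m)\,)$ (or $L((1^m))$, depending on convention). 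Taking dimensions, we need $\dim L^{(m)}$. Granting that, the shuffle count gives
\[
\langle W,h_\mu\rangle=\binom{en}{e\mu_1,\dots,e\mu_k}\prod_i \dim L^{(\mu_i)}.
\]
This matches $(en)!\,\varphi^{(e)}(h_\mu)=\binom{en}{e\mu}$, which is what \cref{varphi} requires, exactly when $\dim L^{(m)}=1$. So the formula is confirmed once we know the divided-power module $L(\,(m)\,)$ is one-dimensional, i.e.\ has character $[0,1,\dots,e-1,0,1,\dots,e-1,\dots]$ ($m$ copies).

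The main obstacle is this last claim, together with the compatibility of invariants with induction. I would attack it by showing that the word $(0,1,\dots,e-1)^m$ occurs in $\operatorname{ch}(L^{\circ m})$ with multiplicity exactly $m!$, coming only from the identity shuffles. This follows because any nontrivial shuffle of the blocks produces a different word. The corresponding $m!$-dimensional weight space is then a copy of the regular representation of $S_m$, with the intertwiners acting freely. Hence exactly one copy lies in the invariants (or sign part), while the other simple summands $L(\la)$ for $\la\neq(m)$ cannot contribute that weight. Comparing graded dimensions with the known dimension $m!$ then shows $L((m))$ is one-dimensional. Finally, since both sides give the same values of $\langle\cdot,h_\mu\rangle$ for all $\mu$ and $\{h_\mu\}$ is a basis, the characteristic equals $W_n^{(e)}$, and so $a_\la=\dim L(\la)$.
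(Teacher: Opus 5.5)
Your reduction is sound, but it is organized differently from the paper's proof. The paper never computes the $S_n$-character of $L^{\circ n}$ first. It uses the ring isomorphism $\bigoplus_n K_0(\mathcal{C}_n)\cong\La$ with $[L(\la)]\mapsto s_\la$, notes that $[M]\mapsto \dim M/(en)!$ is a ring homomorphism (your shuffle count), and checks that this agrees with $\varphi^{(e)}$ on the generators $h_m$, using that the module with character $[(0,1,\dots,e-1)^m]$ is one-dimensional. The statement about the Frobenius character is then a corollary. You prove that corollary directly via Young-subgroup invariants. This trades the $K_0$ identification for the bimodule decomposition together with the fact that $S_\mu$ acts on $L^{\circ\mu_1}\circ\cdots\circ L^{\circ\mu_k}$ factorwise. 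That fact holds because the $\tau_r$ are induced from one endomorphism of $L\circ L$ and $\circ$ is additive, so it preserves images of symmetrizing idempotents. Both routes rest on the same input: the summand $L((m))=(L^{\circ m})^{S_m}$ is one-dimensional. This is the step where your argument fails.

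Write $\mathbf i=(0,1,\dots,e-1)^m$. Knowing that $e(\mathbf i)L^{\circ m}$ is $m!$-dimensional, even as a regular $S_m$-module, only gives $\dim e(\mathbf i)L((m))=1$. From $L^{\circ m}\cong\bigoplus_\la L(\la)\otimes S^\la$, a regular weight space forces $\dim e(\mathbf i)L(\la)=\dim S^\la>0$ for \emph{every} $\la$. So your claim that the $L(\la)$ with $\la\neq(m)$ do not carry this weight is false, and nothing controls the other weight spaces of $L((m))$. Take $e=m=2$. Then $\operatorname{ch}L^{\circ 2}=2[0101]+4[0011]$, and the two simple summands have characters $[0101]$ and $[0101]+4[0011]$, of dimensions $1$ and $5$. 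Both have a one-dimensional $0101$-weight space, so no count of this weight can decide which one is the $S_2$-invariant part.

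The missing argument has three steps.
\begin{enumerate}
\item Write down the one-dimensional module $D$ with character $[\mathbf i]$, with all $y_r,\psi_r$ acting by $0$. The relations hold because consecutive letters of $\mathbf i$ are distinct and adjacent, so the relevant $Q_{ij}$ and braid error terms vanish at $y=0$. This is exactly the fact the paper invokes.
\item By Frobenius reciprocity, $\dim\Hom(L^{\circ m},D)=\dim\Hom(L^{\boxtimes m},\Res D)=1$. Since $L^{\circ m}$ is semisimple, $D\cong L(\la)$ for some $\la$ with $\dim S^\la=1$, i.e.\ $\la\in\{(m),(1^m)\}$.
\item Check, from the normalization of the intertwiners, that $S_m$ acts trivially rather than by the sign on the $D$-isotypic part.
\end{enumerate}
Step (c) is not a harmless convention, as your parenthetical suggests. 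If the action were by the sign, your argument run with sign-isotypic parts would give $\langle\cdot,e_\mu\rangle=\binom{en}{e\mu}$. That yields Frobenius characteristic $\omega W_n^{(e)}$ and $a_{\la'}=\dim L(\la)$, not the stated theorem.
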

 
\begin{proof}
Let $\mathcal{C}_n$ be the full subcategory of finite dimensional $R(n\delta)$-modules with all Jordan-Holder quotients isomorphic to $L(\la)$ for some partition $\la$. The the results of \cite{klr3} imply the existence of an isomorphism,
\[
\bigoplus_{n=0}^{\infty} K_0(\mathcal{C}_n) \cong \La.
\]
under which the class of the simple module $L(\la)$ gets sent to $s_{\la}$.

The product on the left hand side is given by induction, which implies that $[M]\mapsto \frac{\dim M}{(en)!}$ for $M\in \mathcal{C}_n$ is a ring homomorphism.

We need to show that this ring homomorphism agrees with $\varphi^{(e)}$. Since the complete symmetric functions generate $\Lambda$, it follows from the fact that $L(1^n)$ is one-dimensional, which is easy to see as we can explicitly construct it as the module with character $[(0,1,2,\ldots,e-1)^n]$
\end{proof} 
 
As a consequence, we obtain 
 
\begin{cor}
The Frobenius character of $L^{\circ n}$ is equal to $W_n^{(e)}$.
\end{cor}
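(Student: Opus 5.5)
The plan is to read off the Frobenius character directly from the bimodule decomposition and then use the preceding theorem. Recall that we have
\[
L^{\circ n} \cong \bigoplus_{\la\vdash n} L(\la)\otimes S^\la
\]
as $R(n\delta)$--$S_n$-bimodules. Forgetting the $R(n\delta)$-action, $L^{\circ n}$ is an $S_n$-module, and its isotypic decomposition is $\bigoplus_{\la} (S^\la)^{\oplus \dim L(\la)}$. Since the Frobenius characteristic $\operatorname{ch}$ is additive and sends $S^\la$ to $s_\la$, we get
\[
\operatorname{ch}\bigl(L^{\circ n}\bigr)=\sum_{\la\vdash n} \dim L(\la)\, s_\la .
\]

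By the preceding theorem, $\dim L(\la)=a_\la$ for every $\la\vdash n$. Hence the right-hand side equals $\sum_{\la\vdash n} a_\la s_\la$, which is $W_n^{(e)}$ by the definition of the coefficients $a_\la$.

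The only point needing care is that the $S_n$-action on $L^{\circ n}$ is the one coming from the $S_n$-factor of the bimodule structure in \cite[\S 17]{klr3}, so that the multiplicity of $S^\la$ is indeed $\dim L(\la)$. Here $\dim L(\la)$ means the total dimension over the ground field, which is the dimension used in the ring homomorphism $[M]\mapsto \dim M/(en)!$ in the preceding proof. Since we work in characteristic zero, $\mathbb{Q}S_n$ is semisimple, and the decomposition above is the isotypic decomposition. There is no real obstacle beyond this bookkeeping.
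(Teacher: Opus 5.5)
Your proposal is correct and is exactly the argument the paper leaves implicit in ``As a consequence'': the bimodule decomposition $L^{\circ n}\cong\bigoplus_{\la\vdash n}L(\la)\otimes S^\la$ shows that $S^\la$ occurs in $L^{\circ n}$ with multiplicity $\dim L(\la)$. The preceding theorem then gives $\dim L(\la)=a_\la$, so the Frobenius character is $\sum_{\la\vdash n}a_\la s_\la=W_n^{(e)}$.
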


We remark also that as we now know the dimensions of the modules $L(\la)$, we can identify them with skew-Specht modules from \cite{skewspecht}. In particular there is a skew-Specht module ${\mathbf{S}}^{\zeta(\la)}$ constructed whose head is $L(\la)$ according to \cite[Theorem B]{skewspecht}. In the characteristic zero case we are considering, this surjection from ${\mathbf{S}}^{\zeta(\la)}$ to $L(\la)$ is an isomorphism by comparing their dimensions. (In general skew-Specht modules are not irreducible, even for purely imaginary skew-Specht modules in characteristic zero).


\bibliographystyle{alphaurl}
\bibliography{schurpositive}

\end{document}